\newtheorem{theorem}{Theorem}[section] 
\newtheorem{lemma}[theorem]{Lemma}     
\newtheorem{corollary}[theorem]{Corollary}
\newtheorem{proposition}[theorem]{Proposition}
\newtheorem{remark}[theorem]{Remark}
\newtheorem{alphatheorem}{Theorem}
\newcommand{\fX}{\mathfrak{X}}
\newcommand{\C}{\mathbb{C}}
\newcommand{\hm}{\mathrm{Hom}}
\newcommand{\SL}{\mathrm{SL}}
\newcommand{\SU}{\mathrm{SU}}
\newcommand{\GL}{\mathrm{GL}}
\newcommand{\U}{\mathrm{U}}
\newcommand{\quot}{/\!\!/}
\newcommand{\bG}{\mathbf{G}}
\newcommand{\R}{\mathbb{R}}
\newcommand{\Z}{\mathbb{Z}}
\title[Fundamental Group of Moduli Spaces]{Fundamental Group of Moduli Spaces of Representations}
\date{\today}
\author[I. Biswas]{Indranil Biswas}
\address{School of Mathematics, Tata Institute of Fundamental
Research, Homi Bhabha Road, Bombay 400005, India}
\email{indranil@math.tifr.res.in}
\author[S. Lawton]{Sean Lawton}
\address{Department of Mathematical Sciences, George Mason University,
4400 University Drive,
Fairfax, Virginia  22030, USA}
\email{slawton3@gmu.edu}
\subjclass[2010]{14D20,14L30,14F35}
\keywords{character variety, moduli space, fundamental group, Higgs bundle}
\thanks{The first author is supported by the J. C. Bose Fellowship. The second author was partially supported by the Simons Foundation (\#245642) and the NSF-DMS (\#1309376).}
\begin{document}

\begin{abstract}
Let $\Sigma_{g,n}$ be a surface of genus $g$ with $n$ points removed, $G$ a connected
Lie group, and $\fX_{g,n}(G)$ the moduli space of representations of
$\pi_1(\Sigma_{g,n})$ into $G$.  We compute the fundamental group of $\fX_{g,n}(G)$
when $n>0$ and $G$ is a real or complex reductive algebraic group, or a compact Lie group; and when $n=0$ and
$G=\GL(m,\C)$, $\SL(m,\C)$, $\U(m)$, or $\SU(m)$.
\end{abstract}


\maketitle

\section{Introduction}

Consider $\bG$ a connected reductive algebraic group over $\C$.  We will say a Zariski 
dense subgroup $G\subset \bG$ is {\it real reductive} if $\bG(\R)_0\subset G\subset 
\bG(\R)$.  Let $\Gamma$ be a finitely generated discrete group.  Then $G$ acts on the 
analytic space $\hm(\Gamma, G)$ by conjugation.  Let $\hm(\Gamma, G)^*\,\subset\,
\hm(\Gamma, G)$ be the subspace 
with closed orbits.  Then the quotient space $\fX_{\Gamma}(G):=\hm(\Gamma,G)^*/G$
for the adjoint action is 
called the $G$-character variety of $\Gamma$.  By \cite{RiSl}, the space $\fX_{\Gamma}(G)$ is 
Hausdorff and when $G$ is real algebraic it is moreover semi-algebraic 
(and thus deformation retracts to compact space, which implies its fundamental group is 
finitely generated). Two cases of interest in this paper are when $G$ is taken to be 
$\bG$ itself and then $\fX_{\Gamma}(G)$ is the GIT quotient $\hm(\Gamma,\bG)\quot \bG$;
and when $G$ is a maximal compact 
subgroup of $\bG$, in which case $\fX_{\Gamma}(G)$ is the usual orbit space $\hm(\Gamma, G)/G$.

Moduli spaces of representations arise naturally throughout mathematics and 
physics.  In particular, when $\Gamma$ is the fundamental group of a complex manifold 
$M$, then $\fX_\Gamma(G)$ is called the Betti moduli space and is biholomorphic to the 
\v{C}ech moduli space of flat principal $G$-bundles over $M$.  This is in turn 
biholomorphic to the moduli space of flat $G$-connections on $M$, called the de 
Rham moduli space.  Lastly, there is the Dolbeault moduli space of Higgs bundles over 
$M$, which although remains diffeomorphic to the others, is equipped with a different 
complex structure which depends on $M$.  See \cite{Xia} for more about these four 
moduli spaces.

The topological study of these moduli spaces has a long history; beginning with the papers \cite{A-B, Hitchin} and \cite{Goldman-components}. More recent developments concerning their homotopy groups include \cite{BGG} for closed surface groups, and \cite{FlLaRa} for open surface groups.  

For open surface groups $\Gamma$ and connected $G$, it is easy to see that the spaces $\fX_{\Gamma}(G)$ are path-connected.  For closed surface groups $\Gamma$ and $G$ connected and compact, the components of the moduli spaces $\fX_{\Gamma}(G)$ were described in \cite{Ho-Liu-ctd-comp-I, Ho-Liu-ctd-comp-II}.  When $G$ is complex reductive and $\Gamma$ is a Riemann surface group, the components have been described in \cite{Li-surface-groups}, \cite{Lawton-Ramras}, and \cite{GaOl}.  For example, when the derived subgroup of $G$ (in either the compact or complex case) is simply connected, the moduli spaces are path-connected.  

In \cite{Lawton-Ramras}, the systematic study of the fundamental group of these moduli spaces was initiated.  In this paper, we complete part of that project.  In particular, we consider Riemann surfaces $M=\Sigma_{g,n}$ of genus $g$ with $n$ points removed.  We compute the fundamental groups of the above moduli spaces when $n>0$ for any complex reductive $\bG$ which by \cite{FlLaFree} gives the same result for compact groups $K$.  In particular, we prove:

\begin{alphatheorem}\label{thm:open}
Let $G$ be either a connected reductive algebraic group over $\C$, or a connected 
compact Lie group.  Then $\pi_1(\fX_{\pi_1\Sigma_{g,n}}(G))\,=\,\pi_1(G/[G,G])^{2g+n-1},$ 
whenever $n\,>\,0$. In particular, $\fX_{\pi_1\Sigma_{g,n}}(G)$ is simply connected if
$G$ is semisimple and $n\,>\,0$. 
\end{alphatheorem}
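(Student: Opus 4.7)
Since $n > 0$, the surface $\Sigma_{g,n}$ is homotopy equivalent to a wedge of $r := 2g+n-1$ circles, so $\pi_1(\Sigma_{g,n}) \cong F_r$ is free, and the space in question is the free-group character variety $\fX_{F_r}(G) = G^r \quot G$. By the strong deformation retraction result of \cite{FlLaFree}, the case of $\bG$ complex reductive with maximal compact subgroup $K$ reduces to the case of $K$; since $\pi_1(\bG/[\bG,\bG]) = \pi_1(K/[K,K])$, I assume $G$ is connected compact for the remainder.

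\textbf{Structural decomposition.} Put $T := Z(G)^\circ$ and $D := [G,G]$, so $G \cong (T \times D)/\Phi$ for $\Phi := T \cap D$ a finite central subgroup. Since $T$ is central, $G$-conjugation on $G^r$ factors through $D$-conjugation on the $D$-factor, yielding
\[
\fX_{F_r}(G) \;\cong\; \bigl(T^r \times \fX_{F_r}(D)\bigr) / \Phi^r,
\]
where $\Phi^r$ acts freely on $T^r$ by coordinatewise translation and centrally on $\fX_{F_r}(D)$ through $Z(D)$. Note $T/\Phi \cong G/D$, so $\pi_1(T/\Phi) = \pi_1(G/[G,G])$.

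\textbf{Main input: simple connectivity in the semisimple case.} The crux is that $\fX_{F_r}(D)$ is simply connected for every connected compact semisimple $D$. When $D$ is simply connected, this is established in \cite{Lawton-Ramras} by showing that the smooth irreducible locus is simply connected with complement of real codimension at least $2$. For general $D = \tilde D/Z$ with $Z \subset Z(\tilde D)$ finite, the map $\fX_{F_r}(\tilde D) \to \fX_{F_r}(D)$ realises the target as the quotient by the central $Z^r$-action; a Luna-slice analysis then shows that the stabilisers of this action—supplied by the abundant reducible representations into a maximal torus—generate $Z^r$, so the quotient remains simply connected. This step is the main obstacle: naive covering-space reasoning would predict $\pi_1 = Z^r$, and defeating this prediction requires a careful stratification of the singular set and control of how paths in the character variety lift to $\tilde D^r$.

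\textbf{Assembly.} Since $\Phi^r$ acts freely on $T^r$, the identification in the structural step realises $\fX_{F_r}(G)$ as the total space of a fiber bundle over $(T/\Phi)^r$ with fiber $\fX_{F_r}(D)$. The long exact homotopy sequence, combined with path-connectedness and simple connectivity of $\fX_{F_r}(D)$, collapses to give $\pi_1(\fX_{F_r}(G)) \cong \pi_1((T/\Phi)^r) = \pi_1(G/[G,G])^r$. The semisimple statement is the specialisation $T = 1$.
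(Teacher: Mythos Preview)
Your overall architecture matches the paper's: both identify the fibration $\fX_{F_r}(DG)\hookrightarrow \fX_{F_r}(G)\twoheadrightarrow (G/DG)^r$ and reduce everything, via the homotopy long exact sequence, to showing $\fX_{F_r}(D)$ is simply connected for $D$ semisimple. The genuine difference is in how that simple connectivity is proved.

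The paper argues directly for arbitrary connected semisimple $D$ (no case split on $\pi_1(D)$). It first shows the quotient map $p\colon D^r\to \fX_{F_r}(D)$ is $\pi_1$-surjective, by restricting to the good locus (where $p$ is a $PD$-fibration, hence $\pi_1$-surjective since $PD$ is connected) and using normality of $\fX_{F_r}(D)$ to push surjectivity from the open stratum to the whole space. Then it observes that for each coordinate inclusion $\iota_k\colon D\hookrightarrow D^r$, the composite $p\circ\iota_k$ factors through $D\quot D$, which is simply connected by Daluge's theorem. Since the $\iota_k$ generate $\pi_1(D^r)$, the $\pi_1$-surjective map $p_*$ has trivial image, and $\pi_1(\fX_{F_r}(D))=0$.

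Your route instead bootstraps from the simply connected cover $\tilde D$ (citing \cite{Lawton-Ramras}) and then argues that the finite quotient $\fX_{F_r}(\tilde D)/Z^r$ stays simply connected via an Armstrong-type theorem, i.e.\ because the isotropy subgroups of the $Z^r$-action generate $Z^r$. This is correct, but it leans on more external input (Armstrong's theorem on fundamental groups of orbit spaces, and implicitly Got\^o's theorem that every element of a compact connected semisimple group is a single commutator, which is what actually produces the needed fixed points). Your phrase ``reducible representations into a maximal torus'' is not quite the right source: the clean fixed points are tuples $(g,e,\dots,e)$ with $[h,g]=z$ for given $z\in Z$, and such $g$ need not lie in a chosen maximal torus. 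The paper's argument avoids all of this by never passing to $\tilde D$ and by using only the elementary fact $\pi_1(D\quot D)=0$. A minor further difference: you reduce complex to compact at the outset, whereas the paper works over $\C$ (where normality and GIT are available) and deduces the compact case at the end via \cite{FlLaFree}.
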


As a corollary, using results in \cite{CFLO}, we also determine $\pi_1(\fX_{\pi_1\Sigma_{g,n}}(G))$ when $G$ is real reductive.

After addressing moduli spaces over open Riemann surfaces, we turn to closed Riemann surfaces.

\begin{alphatheorem}\label{thm:closed}
If $G=\GL(n,\C),$ or $\U(n)$, then $\pi_1(\fX_{\pi_1\Sigma_{g,0}}(G))=\Z^{2g}.$ 
If $G=\SL(n,\C),$ or $\SU(n)$, then $\pi_1(\fX_{\pi_1\Sigma_{g,0}}(G))=0.$
\end{alphatheorem}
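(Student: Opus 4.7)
The plan is to reduce both parts of Theorem~\ref{thm:closed} to the simple-connectedness of $\fX_\Gamma(\SL(n,\C))$ and $\fX_\Gamma(\SU(n))$, where $\Gamma:=\pi_1(\Sigma_{g,0})$, by exploiting the determinant. The central isogeny $\SL(n,\C)\times\C^*\to\GL(n,\C)$, $(A,\lambda)\mapsto\lambda A$, has kernel $\mu_n$ embedded antidiagonally in the center. Pushing it forward to representations of $\Gamma$, and using that $\Gamma^{\mathrm{ab}}=\Z^{2g}$ together with the divisibility of $\C^*$ (which allows one to extract an $n$-th root of any determinant character), I expect a surjection
\[
\fX_\Gamma(\SL(n,\C))\times\fX_\Gamma(\C^*)\longrightarrow\fX_\Gamma(\GL(n,\C)),\qquad (\rho,\chi)\mapsto\chi\cdot\rho,
\]
whose fibers are the orbits of the diagonal action of $\hm(\Gamma^{\mathrm{ab}},\mu_n)\cong\mu_n^{2g}$. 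Since $\fX_\Gamma(\C^*)=(\C^*)^{2g}$, this identifies
\[
\fX_\Gamma(\GL(n,\C))\,=\,\bigl(\fX_\Gamma(\SL(n,\C))\times(\C^*)^{2g}\bigr)\big/\mu_n^{2g},
\]
with $\mu_n^{2g}$ acting freely on the second factor by componentwise multiplication by $n$-th roots of unity. The identical argument in the compact setting yields $\fX_\Gamma(\U(n))=(\fX_\Gamma(\SU(n))\times\U(1)^{2g})/\mu_n^{2g}$.

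Because $\mu_n^{2g}$ acts freely on the $(\C^*)^{2g}$-factor (regardless of any fixed points of its action on $\fX_\Gamma(\SL(n,\C))$), the quotient is the associated fiber bundle over $(\C^*)^{2g}/\mu_n^{2g}$ with fiber $\fX_\Gamma(\SL(n,\C))$, and the $n$-th power map identifies $(\C^*)^{2g}/\mu_n^{2g}$ with $(\C^*)^{2g}$. Granting that the fiber is simply connected and path-connected, the homotopy long exact sequence (using $\pi_2$ of a torus is zero) at once yields
\[
\pi_1(\fX_\Gamma(\GL(n,\C)))\,=\,\pi_1((\C^*)^{2g})\,=\,\Z^{2g},
\]
and the analogous argument with $\U(1)^{2g}$ in place of $(\C^*)^{2g}$ produces $\pi_1(\fX_\Gamma(\U(n)))=\Z^{2g}$.

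The crux is therefore to establish $\pi_1(\fX_\Gamma(\SL(n,\C)))=0$ and $\pi_1(\fX_\Gamma(\SU(n)))=0$. For this I would invoke non-abelian Hodge theory: Simpson's theorem identifies $\fX_\Gamma(\SL(n,\C))$ with the moduli space of semistable $\SL(n,\C)$-Higgs bundles on $\Sigma_g$, which deformation retracts (via the $\C^*$-action scaling the Higgs field, with $t\to 0$) onto the moduli space $\mathcal N$ of semistable rank-$n$ vector bundles with trivial determinant; the latter's simple-connectedness is classical, descending from Atiyah-Bott equivariant Morse theory on the Harder-Narasimhan stratification of the space of unitary connections. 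The compact statement $\pi_1(\fX_\Gamma(\SU(n)))=0$ can then be deduced from the deformation retraction of the complex character variety onto the compact one along the lines of the cited literature. The main obstacle is the presence of the strictly semistable (equivalently, reducible) locus, which is genuinely singular and must be handled with care in both the Morse-theoretic input over $\mathcal N$ and in the application of the homotopy long exact sequence to a quotient of singular spaces; the latter is mitigated by the observation that the fiber bundle structure depends only on the freeness of the $\mu_n^{2g}$-action on the $(\C^*)^{2g}$-factor.
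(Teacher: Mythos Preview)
Your reduction from the $\GL$ case to the $\SL$ case (and $\U$ to $\SU$) via the central isogeny and the resulting fiber bundle over $(\C^*)^{2g}$ is correct, and it mirrors the paper's argument, which uses the determinant map $\phi\colon\mathcal M_H(X,n)\to\mathcal M_H(X,1)$ on the Higgs side to get an $\mathcal N_H(X,n)$--fiber bundle over ${\rm Pic}^0(X)\times H^0(X,K_X)$. So the architecture of the two proofs agrees.

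The substantive gap is in your argument for $\pi_1(\fX_\Gamma(\SL(n,\C)))=0$. The claim that the Higgs moduli space ``deformation retracts (via the $\C^*$-action scaling the Higgs field, with $t\to 0$) onto the moduli space $\mathcal N$ of semistable rank-$n$ vector bundles with trivial determinant'' is not correct. For a semistable Higgs bundle $(E,\theta)$, the limit $\lim_{t\to 0}(E,t\theta)$ is a $\C^*$--fixed point, i.e.\ a system of Hodge bundles, and in general not a point with vanishing Higgs field; moreover the assignment $(E,\theta)\mapsto\lim_{t\to 0}(E,t\theta)$ is not globally continuous (it jumps across the Bia{\l}ynicki-Birula strata), so it does not define a retraction. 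What the $\C^*$--action does give is a retraction onto the nilpotent cone, which strictly contains $\mathcal N$ and is singular. One can instead use Hitchin's Morse theory for the proper function $\|\theta\|^2$, whose absolute minimum is $\mathcal N$, but then you must show that every other critical submanifold has Morse index at least $2$; this is known in the coprime (smooth) setting and is considerably more delicate in the present non-coprime case where the moduli space is singular.

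The paper sidesteps all of this. It observes that the cotangent bundle $T^*\mathcal N_X^s(n)$ of the moduli of \emph{stable} bundles with trivial determinant sits inside $\mathcal N_H(X,n)$ as a Zariski open subset (a stable bundle with any Higgs field is a stable Higgs bundle). Since $\mathcal N_X^s(n)$ is simply connected, so is $T^*\mathcal N_X^s(n)$. Then, using Simpson's result that $\mathcal N_H(X,n)$ is irreducible and normal, the inclusion of any nonempty Zariski open set is $\pi_1$--surjective; hence $\mathcal N_H(X,n)$ is simply connected. No Morse theory or $\C^*$--limit analysis is needed.

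One further caution: the deformation retraction of the complex character variety onto the compact one that you invoke for $\SU(n)$ is proved in the cited literature only for \emph{free} groups. For closed surface groups the compact statement is obtained differently (e.g.\ via Narasimhan--Seshadri, which identifies $\fX_\Gamma(\SU(n))$ with the moduli of semistable bundles $\mathcal N_X(n)$, itself simply connected).
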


\begin{remark}
{\rm Theorem \ref{thm:open} extends a theorem in \cite{Lawton-Ramras}, and Theorem \ref{thm:closed} was proved in further generality in \cite{BiLaRa}.  In both cases, the methods differ from those in this 
paper.}
\end{remark}

\section*{Acknowledgments}
The second author thanks the Tata Institute of Fundamental Research for hosting him while this work was completed, and thanks Dan Ramras for comments.  We also thank an anonymous referee for helpful suggestions.

\section{Open surfaces}\label{sec:open}

Let $G$ be a connected reductive affine algebraic group over $\C$.  In this section we 
assume that $\Sigma_{g,n}$ is open, so $n\geq 1$ and $\pi_1(\Sigma_{g,n})$ is a free 
group $F_r$ of rank $r=2g+n-1$. Therefore, $\hm(\pi_1(\Sigma_{g,n}), G)\,\cong\, G^r$ 
is a smooth complex variety, and $$\fX_{g,n}(G)\,=\,G^r\quot G\, ,$$ where $G$ acts
by simultaneous conjugation given by $g\cdot (g_1,\cdots ,g_r)=(gg_1g^{-1},\cdots ,gg_rg^{-1})$.  Thus 
$\fX_{g,n}(G)$ is irreducible and normal because $G^r$ is irreducible and smooth (hence normal), and 
these properties are inherited in GIT (see \cite{Dr}, for example).  This provides some known topological properties worth pointing out that will be used in this section and in Section \ref{sec:closed}.

First, irreducible varieties are connected, and moreover any Zariski open subset is  also
connected. Normal varieties have their singular locus in  codimension at least two.
If $S$ is a connected subvariety, then there is an analytic neighborhood $U$ of $S$ such that
$S$ is a deformation retraction of $U$ (by Proposition A.5 in \cite{Hatcher}), and the complement $U\setminus S$ is connected (by \cite{Mu}).

This latter property implies that given any non-empty Zariski open subset $U\subset \fX_{g,n}$, the homomorphism
$\pi_1(U)\longrightarrow \pi_1(\fX_{g,n})$ induced by the inclusion map is surjective (see \cite{ADH}, for example).

Let $DG\,:=\,[G,G]$ be the derived subgroup of $G$, and $Z(G)$ the center of $G$.  We will denote by $T\,=\, Z_0(G)$ the connected component of $Z(G)$ containing the identity element, and $F\,=\,T\cap DG$ is a central finite subgroup.

We have an exact sequence:
$$\xymatrix{ e \ar[r]^{} & DG \ar@{^{(}->}[r] & G \ar@{->>}[r]&G/DG \ar[r]&e},$$ where
$e$ is the identity in $G$, and $G/DG\cong (\C^*)^d$ is a complex torus whose
dimension coincides with the dimension of $Z(G)$.  Moreover, $G\cong T\times_F DG$ where $F$ acts freely on $T\times DG$ by $f\cdot (t,g)=(tf,f^{-1}g)$.

If $G$ is Abelian, $\pi_1(\fX_{g,n}(G))=\pi_1(\hm(F_r,G))\cong \pi_1(G)^r,$ so we 
assume that $G$ is not Abelian.

For $r=1$, the GIT quotient $G\quot G$ deformation
retracts to  $K/K$ by \cite{FlLaFree}, and by \cite[Corollary 17]{Dal}, we have
$\pi_1(K/K)\,=\,\pi_1(K/DK)$; here $K/DK$ is the quotient for the right translation
action and not for the conjugation action.  
Therefore, we conclude that $\pi_1(G\quot G)=\pi_1(G/DG)$ since $G/DG\cong (\C^*)^d$ and $K/DK\cong (S^1)^d$ is contained in $G/DG$ (the quotient $G/DG$ is for the right translation
action). So we assume that $r\,\geq\, 2$.

\begin{remark}\label{semisimpleresult}
{\rm Also, Corollary 18 in \cite{Dal} states $K/K$ is simply-connected if and only
if $K$ is semi-simple.}
\end{remark}
 
The following discussion builds on results in \cite{Weil, JM, Goldman-symplectic}.  A representation $\rho\,\in\, \hm(F_r,G)$ is {\it irreducible} if $\rho(F_r)$ is not contained in any proper parabolic subgroup of $G$, and $\rho$ is {\it good} if it is irreducible and the centralizer of $\rho(F_r)$ in $G$ is equal to the center $Z(G)$ of $G$.

For any group $\pi_1(\Sigma_{g,n})$, the set of good representations $\hm^g(\pi_1(\Sigma_{g,n}), G)$ form a Zariski open $G$-stable subset of the polystable locus (points with closed orbits).  Moreover, when $n>0$ they are smooth as they are open subsets in a smooth manifold, and when $n=0$ they are also smooth by \cite[Proposition 37]{Sikora-CharVar}. The action of $PG$ on the set of irreducible representations is proper, and so we conclude that $$\fX^g_{g,n}(G)=\hm^g(\pi_1(\Sigma_{g,n}), G)/G\subset \fX_{g,n}(G)$$ is a smooth manifold since the $PG$-stabilizers are trivial. For open surfaces these sets are non-empty when $r\geq 2$, and for closed surfaces we need only assume $g\,\geq\,2$ for these sets to be non-empty. The dimension is computed to be $(2g+n-2)\dim G+\dim Z(G)$ in the open case, and $(2g-2)\dim G+2\dim Z(G)$ in the closed case.

\begin{lemma}
Let $G$ be a connected reductive affine algebraic group over $\C$.  Then $\hm^g(F_r,G)\to \fX^g_{g,n}(G)$ is a $PG$-bundle, and $\hm^i(F_r,G)\to \fX^i_{g,n}(G)$ is a $PG$-orbibundle.
\end{lemma}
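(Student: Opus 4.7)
The plan is to analyze the stabilizers of the $PG$-action on $\hm^g(F_r, G)$ and $\hm^i(F_r, G)$ and then invoke a slice theorem. For any $\rho \in \hm(F_r, G)$, the $G$-stabilizer under conjugation is the centralizer $L(\rho) := Z_G(\rho(F_r))$; since $Z(G)$ acts trivially on $\hm(F_r, G)$, the stabilizer of the induced $PG$-action at $\rho$ is the algebraic group $L(\rho)/Z(G)$.

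For $\rho \in \hm^g(F_r, G)$, the good hypothesis gives $L(\rho) = Z(G)$, so the $PG$-action on the good locus is free. Combined with the properness of the action noted above and the smoothness of $\hm^g(F_r, G)$, standard results for free proper Lie group actions produce a principal $PG$-bundle structure on $\hm^g(F_r, G) \to \fX^g_{g,n}(G)$.

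For $\rho \in \hm^i(F_r, G)$, the key step is to show $L(\rho)/Z(G)$ is finite. Since $L(\rho)$ is algebraic, its component group $L(\rho)/L(\rho)^0$ is finite, so it suffices to show $L(\rho)^0 \subseteq Z(G)$. If not, then $L(\rho)^0$ contains a non-central semisimple element (or, in the purely unipotent case, apply Jacobson--Morozov to produce one), yielding a non-central cocharacter $\lambda\colon \C^* \to L(\rho)^0 \subseteq G$. The associated parabolic $P(\lambda) = \{g \in G : \lim_{t\to 0} \lambda(t) g \lambda(t)^{-1} \text{ exists}\}$ is proper because $\lambda$ is non-central, and since $\rho(F_r)$ commutes with $\lambda(\C^*)$ we get $\rho(F_r) \subseteq Z_G(\lambda(\C^*)) \subseteq P(\lambda)$, contradicting irreducibility.

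Given finite stabilizers and properness of the $PG$-action on the smooth variety $\hm^i(F_r, G)$, Luna's \'etale slice theorem (or, in the differentiable category, Palais' slice theorem) supplies at each $\rho$ an $L(\rho)/Z(G)$-invariant slice $S$ such that $S/(L(\rho)/Z(G))$ is an orbifold chart for $\fX^i_{g,n}(G)$, and realizes $\hm^i(F_r, G) \to \fX^i_{g,n}(G)$ as a $PG$-orbibundle. The main obstacle I expect is the finiteness step for the irreducible stabilizers; once that structural input is established, both the principal bundle and orbibundle conclusions follow from standard slice-theoretic machinery.
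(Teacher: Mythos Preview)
Your approach is essentially the paper's: invoke a slice theorem for proper actions, together with freeness of the $PG$-action on good representations and finiteness of $PG$-stabilizers on irreducible ones. The paper simply cites \cite{JM} for properness and \cite{Sikora-CharVar} for local freeness, whereas you supply a direct argument for the latter; that is a reasonable and self-contained addition.

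There is, however, a slip in the ``purely unipotent'' branch of your finiteness argument. If $L(\rho)^0$ contains a nontrivial unipotent element $u$, the Jacobson--Morozov cocharacter $\lambda$ attached to $u$ lands in $G$, not in $L(\rho)^0$: indeed $\lambda$ acts on $u$ with weight $2$, so $\lambda(\C^*)$ does not even centralize $u$, let alone $\rho(F_r)$. Hence the assertion ``$\rho(F_r)$ commutes with $\lambda(\C^*)$'' is unjustified in that case, and your displayed cocharacter $\lambda\colon \C^*\to L(\rho)^0$ is not available. Two clean repairs: (i) note that an irreducible subgroup is $G$-completely reducible, so its centralizer is reductive and $L(\rho)^0$ can never be unipotent---then your torus/cocharacter argument handles everything; or (ii) keep the unipotent case but replace the commutativity step by the standard inclusion $Z_G(u)\subseteq P(\lambda)$ for the Jacobson--Morozov parabolic, which yields $\rho(F_r)\subseteq Z_G(u)\subseteq P(\lambda)$ directly. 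Either way the conclusion stands, and the rest of your argument goes through as written.
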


\begin{proof}
This follows from the Slice Theorem for proper actions (see Appendix of \cite{GGK}), and the fact that $PG$ acts effectively and properly on the space of irreducible representations $\hm^i(F_r,G)$ by \cite{JM}, and locally freely by \cite{Sikora-CharVar}.  On $\hm^g(F_r,G)$ the action is free by definition.  
\end{proof}
 
A consequence of the above lemma, is that the projection mapping $\hm^g(F_r,G)\to \fX^g_{g,n}(G)$ is a Serre fibration, but by \cite{rainer} $\hm^i(F_r,G)\to \fX^i_{g,n}(G)$ is a Serre fibration if and only if it has only one orbit type (that is, exactly when the good representations are the irreducible representations).
 
\begin{theorem}\label{freetheorem}
Let $n\geq 1$ and let $G$ be either a connected reductive affine algebraic group over
$\C$, or a connected compact Lie group. Then $\pi_1(\fX_{g,n}(G))\,\cong\, \pi_1(G/DG)^r$.
\end{theorem}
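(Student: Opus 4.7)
The plan is to combine the Serre fibration $PG \to \hm^g(F_r, G) \to \fX^g_{g,n}(G)$ from the preceding Lemma with the structural decomposition $G \cong T \times_F DG$. Since the compact case reduces to the complex case by \cite{FlLaFree}, I restrict attention to the case where $G$ is a complex reductive algebraic group.

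First I would reduce the problem to the semisimple case. The decomposition $G \cong T \times_F DG$ yields an identification
\[
\fX_{g,n}(G) \;\cong\; \bigl(T^r \times \fX_{g,n}(DG)\bigr)\big/F^r,
\]
where $F^r$ acts freely on $T^r$ by translation (and correspondingly on $\fX_{g,n}(DG)$ via the central inclusion $F \subset Z(DG)$). This exhibits $T^r \times \fX_{g,n}(DG) \to \fX_{g,n}(G)$ as a Galois $F^r$-cover (free since it is free on the $T^r$-factor). The associated covering long exact sequence
\[
1 \to \pi_1(T^r) \times \pi_1(\fX_{g,n}(DG)) \to \pi_1(\fX_{g,n}(G)) \to F^r \to 1,
\]
paired with the analogous one for the cover $T^r \to (T/F)^r = (G/DG)^r$ and the natural abelianization map $\fX_{g,n}(G) \to (G/DG)^r$, yields by a five-lemma comparison that the theorem reduces to proving $\pi_1(\fX_{g,n}(DG)) = 0$ for semisimple $DG$.

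For this remaining semisimple case, I would apply the long exact sequence of the Serre fibration (now with $G$ semisimple) to obtain
\[
\pi_1(PG) \to \pi_1(\hm^g(F_r, G)) \to \pi_1(\fX^g_{g,n}(G)) \to 1.
\]
I would identify $\pi_1(\hm^g(F_r, G))$ via the Zariski open inclusion into the smooth variety $G^r$ (using codimension of the complement), and compute the image of the orbit map $\pi_1(PG) \to \pi_1(G)^r$ using the exact sequence $Z(G) \to G \to PG$: loops in $PG$ lift to paths in $G$ ending at central elements $z \in Z(G)$, and the resulting ``commutator loops'' in $\hm^g(F_r, G)$ produced by simultaneous conjugation account for all of $\pi_1(G)^r$. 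Combined with the surjection $\pi_1(\fX^g_{g,n}(G)) \twoheadrightarrow \pi_1(\fX_{g,n}(G))$ from the Zariski open inclusion into the normal irreducible variety $\fX_{g,n}(G)$, this would give the desired vanishing.

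The hardest part will be the precise computation of the orbit-map image $\pi_1(PG) \to \pi_1(G)^r$. A direct calculation using that inner automorphisms of a topological group act trivially on $\pi_1$ naively produces a zero map on those elements of $\pi_1(PG)$ that lift to loops in $G$; the nontrivial image must therefore come entirely from lifts of $PG$-loops ending at nontrivial central elements of $Z(G)$, and this correction must be carefully tracked. A secondary technical issue is the treatment of codimension-$1$ components of the complement of $\hm^g$ in $G^r$, which occur for small semisimple groups such as $\SL(2,\C)$ or the adjoint $\mathrm{PGL}(2,\C)$ when $r=2$; these contributions might need an Armstrong-type argument for finite quotients (applied to the cover $\fX_{g,n}(\widetilde{G}) \to \fX_{g,n}(G)$ coming from the simply-connected cover $\widetilde{G} \to G$) or a more refined passage from $\fX^g$ to $\fX$ using the normality and the structure of the orbit-type stratification.
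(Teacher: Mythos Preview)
Your reduction to the semisimple case via the $F^r$-cover is fine (the paper uses the essentially equivalent fibration $\fX_{g,n}(DG)\to\fX_{g,n}(G)\to\fX_{g,n}(G/DG)$), but the semisimple step contains a genuine error. You assert that the orbit-map image of $\pi_1(PG)$ in $\pi_1(G)^r$ ``account[s] for all of $\pi_1(G)^r$''. In fact this image is \emph{zero}. Your own remark that inner automorphisms act trivially on $\pi_1$ applies not only to $PG$-loops that lift to $G$-loops, but to all of $\pi_1(PG)$: if $\gamma$ is a path in $G$ from $e$ to a central $z$ and $\tilde G\to G$ is the universal cover, then any lift $\tilde z$ of $z$ lies in $Z(\tilde G)=p^{-1}(Z(G))$, so the lift of the conjugation loop $t\mapsto g_i^{-1}\gamma(t)g_i\gamma(t)^{-1}$ ends at $\tilde g_i^{-1}\tilde z\tilde g_i\tilde z^{-1}=\tilde e$. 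Hence each such loop is null-homotopic in $G$. Consequently the long exact sequence of the fibration only yields a surjection $\pi_1(\fX^g_{g,n}(G))\twoheadrightarrow \pi_1(G)^r$, which is the wrong direction for what you need, and no amount of passing from $\fX^g$ to $\fX$ via normality will repair this: you have produced extra fundamental group rather than killed it.

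The paper closes this gap by an entirely different mechanism. It does not attempt to compute $\pi_1(\fX^g)$; it uses the good locus only to establish that the full quotient map $p\colon G^r\to\fX_{g,n}(G)$ is $\pi_1$-surjective. The actual killing of $\pi_1(G)^r$ then comes from observing that for each coordinate $k$ the inclusion $G\hookrightarrow G^r$ into the $k$-th slot is $G$-equivariant, hence descends to a map $G\quot G\to\fX_{g,n}(G)$. For semisimple $G$ the space $G\quot G$ is simply connected (Remark~\ref{semisimpleresult}), so every factor-wise generator of $\pi_1(G^r)$ dies in $\pi_1(\fX_{g,n}(G))$. This factorization through the single-copy quotient $G\quot G$ is the missing idea; it replaces your attempted orbit-map computation and sidesteps the codimension-one worries you flagged at the end.
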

 
\begin{proof}
First assume that $G$ is a connected reductive affine algebraic group over $\C$. A
continuous map will be called $\pi_1$-\textit{surjective} if the corresponding homomorphism
of fundamental groups is surjective. We will first
show that $p:\hm(F_r,G)\to \fX_{g,n}(G)$ is $\pi_1$-surjective.

Consider the following commutative diagram:  $$ \xymatrix{
\hm(F_r,G)\ar[drr]^{p}& & \\
\hm^g(F_r, G)\ar@{^{(}->}[u]^{i_g} \ar[r]_{p_g} & \fX^g_{g,n}(G)\ar@{^{(}->}[r]_{j_g}&\fX_{g,n}(G)  \\
} $$
The fibration $\hm^g(\pi_1(\Sigma_{g,n}),G)\to \fX^g_{g,n}(G)$ gives a long exact sequence in homotopy:  $$\cdots\to \pi_1(PG)\to \pi_1(\hm^g(\pi_1(\Sigma_{g,n}),G))\to \pi_1(\fX^g_{g,n}(G))\to \pi_0(PG)=0,$$  which shows that $p_g$ is $\pi_1$-surjective since $PG$ is connected.

Normality shows that the map $j_g$ is $\pi_1$-surjective.  Commutativity then implies that $p$ is $\pi_1$-surjective.

Next, the fibration $DG\to T\times DG\to T$ naturally gives a fibration $\hm(F_r,DG)\to\hm(F_r,T\times DG)\to \hm(F_r,T),$ and consequently, a fibration $\fX_{g,n}(DG)\to \fX_{g,n}(T\times DG)\to \fX_{g,n}(T),$ where $\fX_{g,n}(F)\cong F^r$ equivariantly acts freely on $\fX_{g,n}(T\times DG)\cong \fX_{g,n}(T)\times \fX_{g,n}(DG).$  This latter fibration descends to a fibration:
$$\xymatrix{\fX_{g,n}(DG) \ar@{^{(}->}[r] & \fX_{g,n}(G)
\ar@{->>}[r]&\fX_{g,n}(G/DG)},$$ since $T\times_F DG\cong G$ and $T/F\cong G/DG$ (see \cite{BiLaRa} for a generalization).

Thus, we have a long exact sequence in homotopy:
$$\cdots\to \pi_1(\fX_{g,n}(DG))\to \pi_1(\fX_{g,n}(G))\to \pi_1(\fX_{g,n}(G/DG))\to
\pi_0(\fX_{g,n}(DG))=0\, ,$$ since $\fX_{g,n}(DG)$ is connected.  But because $G/DG$ is Abelian,
we have $\fX_{g,n}(G/DG)\,\cong\, (G/DG)^r$.  So it suffices to prove that $\fX_{g,n}(DG)$ is simply-connected.

Indeed, $DG$ is semi-simple, so for any $1\leq k\leq r$ consider this commutative
diagram:  $$ \xymatrix{
DG\ar@{^{(}->}[r] \ar[d]& \hm(F_r,DG)\cong (DG)^r\ar[d]^p \\
DG\quot DG \ar[r] &\fX_{g,n}(DG)  \\
} $$
where the top map is the $k$-th factor inclusion $g\mapsto (e,...e,g,e,...e),$ and the bottom map is well-defined since the $k$-th factor inclusion is $DG$-equivariant.  By Remark \ref{semisimpleresult}, $DG\quot DG$ is simply-connected, and thus the generators of $\pi_1((DG)^r)$ all map to $0$ in $\pi_1(\fX_{g,n}(DG))$ by commutativity.  However, we have shown $p$ is $\pi_1$-surjective.  Together, this implies that $\fX_{g,n}(DG)$ is simply-connected.

In \cite{FlLaFree} is it shown that $\fX_{g,n}(K_G)$ is a deformation retraction of
$\fX_{g,n}(G)$, where $K_G$ is a maximal compact subgroup of $G$.
Therefore, the compact case follows from the complex reductive case.
\end{proof}

\begin{corollary}
Let $n\geq 1$ and let $G$ be a connected real reductive Lie group.  Then $\pi_1(\fX_{g,n}(G))\,\cong\, \pi_1(K/DK)^r$.
\end{corollary}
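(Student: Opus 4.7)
The plan is to reduce the real reductive case directly to the compact case established in Theorem~\ref{freetheorem}, using a deformation retraction of character varieties. The key input is the main result of \cite{CFLO}: if $G$ is a connected real reductive Lie group with maximal compact subgroup $K$, then $\fX_{g,n}(K)$ is a strong deformation retract of $\fX_{g,n}(G)$. This is the real analogue of the complex reductive statement of \cite{FlLaFree} that was used at the very end of the proof of Theorem~\ref{freetheorem}. Since a strong deformation retraction is a homotopy equivalence, it induces an isomorphism $\pi_1(\fX_{g,n}(G)) \cong \pi_1(\fX_{g,n}(K))$.

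Second, because $K$ is a connected compact Lie group, Theorem~\ref{freetheorem} applies verbatim to give $\pi_1(\fX_{g,n}(K)) \cong \pi_1(K/DK)^{r}$, where $r = 2g+n-1$. Composing the two isomorphisms yields $\pi_1(\fX_{g,n}(G)) \cong \pi_1(K/DK)^{r}$, which is the desired conclusion.

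There is essentially no hard step left internal to this paper: the entire substance is outsourced to the deformation retraction theorem of \cite{CFLO} and to Theorem~\ref{freetheorem}. The only thing worth a sanity check is that the definition of ``connected real reductive Lie group'' used in the corollary matches the framework of \cite{CFLO} (so that the retraction is indeed applicable) and that the maximal compact subgroup $K$ of $G$ is the same as a maximal compact of the ambient complex reductive group $\bG$ up to the relevant homotopy, ensuring that the compact-case hypotheses of Theorem~\ref{freetheorem} are met. Beyond these standard comparisons, the argument is a two-line composition.
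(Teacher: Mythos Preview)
Your proposal is correct and matches the paper's own proof essentially line for line: the paper also invokes \cite{CFLO} to pass from $\fX_{g,n}(G)$ to $\fX_{g,n}(K)$ up to homotopy, and then applies Theorem~\ref{freetheorem} to the connected compact group $K$ to obtain $\pi_1(K/DK)^r$. Your additional sanity checks about compatibility of definitions are appropriate but go slightly beyond what the paper spells out.
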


\begin{proof}
By \cite{CFLO}, $\fX_{g,n}(G)$ is homotopic to $\fX_{g,n}(K)$ where $K$ is a maximal compact subgroup of $G$.  By Theorem \ref{freetheorem}, we conclude that  $\pi_1(\fX_{g,n}(K))\,\cong\, \pi_1(K/DK)^r$.  
\end{proof}

\begin{remark}
We note that we cannot replace $K/DK$ in the above theorem with $G/DG$ $($as in the complex reductive case$)$; for example, $G=\mathrm{U}(p,q)$ gives a counter-example.  However, we do know in general that $K/DK$ is a geometric torus as it is a compact connected Abelian Lie group.
\end{remark}

\section{Closed surfaces}\label{sec:closed}

Let $X$ be a compact connected Riemann surface of genus $g$, with $g\, \geq\, 2$. Take
any $n\, \geq\, 2$ such that $n\,\not=\, 2$ if $g\,=\,2$. The holomorphic cotangent
bundle of $X$ will be denoted by $K_X$.

A Higgs field on a holomorphic vector bundle
$E$ on $X$ is a holomorphic section of $\text{End}(E)\otimes K_X\,=\,
E\otimes E^*\otimes K_X$. A Higgs bundle is a pair of the form $(E\, ,\theta)$,
where $E$ is a holomorphic vector bundle on $X$ and $\theta$ is a Higgs field on $E$.
A Higgs bundle $(E\, ,\theta)$ is called semistable if $\text{degree}(F)/\text{rank}(F)
\,\leq\, \text{degree}(E)/\text{rank}(E)$ for any nonzero holomorphic subbundle
$F\, \subset\, E$ with $\theta(F)\, \subset\, F\otimes K_X$.

Let ${\mathcal M}_H(X,n)$ be the moduli space of semistable Higgs bundles on $X$
of rank $n$ and degree zero. Let
$$
{\mathcal N}_H(X,n)\, \subset\, {\mathcal M}_H(X,n)
$$
be the moduli space of semistable $\text{SL}(n,{\mathbb C})$--Higgs bundles. So any
$(E\, ,\theta)\,\in\, {\mathcal M}_H(X,n)$ lies in ${\mathcal N}_H(X,n)$ if
$\bigwedge^n E\,=\, {\mathcal O}_X$ and $\text{trace}(\theta)\,=\, 0$.

\begin{proposition}\label{prop1}
The variety ${\mathcal N}_H(X,n)$ is simply connected.

The morphism ${\mathcal M}_H(X,n)\, \longrightarrow\, {\rm Pic}^0(X)$
that sends any $(E\, ,\theta)$ to the line bundle $\bigwedge^n E$ induces an isomorphism of
fundamental groups. In particular, $\pi_1({\mathcal M}_H(X,n))\,=\, {\mathbb Z}^{2g}$.
\end{proposition}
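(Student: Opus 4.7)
The plan is to first establish simple connectedness of $\mathcal{N}_H(X,n)$ and then bootstrap to $\mathcal{M}_H(X,n)$ via a natural finite \'etale cover built from the determinant and trace.

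For the first assertion, I would start from the observation that $\mathcal{N}_H(X,n)$ is an irreducible normal variety whose singular locus (the strictly polystable locus) has complex codimension at least two when $g\,\geq\, 2$, so by the normality argument recalled at the start of Section~\ref{sec:open} it suffices to show that the smooth stable locus $\mathcal{N}_H^s(X,n)$ is simply connected. Here I would use the $\C^*$-action $t\cdot(E,\theta)=(E,t\theta)$ together with the proper Hitchin map $h\colon \mathcal{N}_H(X,n)\to B\,=\,\bigoplus_{i=2}^n H^0(X,K_X^{\otimes i})$ onto the contractible affine base $B$: the flow as $t\to 0$ retracts $\mathcal{N}_H(X,n)$ onto the nilpotent cone $h^{-1}(0)$, whose dominant irreducible component is the $\theta\,=\,0$ locus, that is, the moduli of semistable rank-$n$ bundles on $X$ with trivial determinant, which is classically simply connected. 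The remaining components of the nilpotent cone are attached along positive-codimension unstable strata, so they do not contribute to $\pi_1$.

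For the second assertion, consider $\Phi\colon \mathcal{M}_H(X,n)\to T^*\text{Pic}^0(X)$ given by $(E,\theta)\mapsto\bigl(\bigwedge^n E,\,\text{trace}(\theta)\bigr)$, whose scheme-theoretic fiber over $(\mathcal{O}_X,0)$ is exactly $\mathcal{N}_H(X,n)$. The group $\text{Pic}^0(X)[n]\cong(\Z/n\Z)^{2g}$ of $n$-torsion line bundles acts freely on $\mathcal{N}_H(X,n)\times T^*\text{Pic}^0(X)$ by $\zeta\cdot((F,\psi),(L,\alpha))=((F\otimes\zeta,\psi),(L\otimes\zeta^{-1},\alpha))$, and the map
$$\Psi\colon \mathcal{N}_H(X,n)\times T^*\text{Pic}^0(X)\longrightarrow \mathcal{M}_H(X,n),\qquad ((F,\psi),(L,\alpha))\longmapsto \bigl(F\otimes L,\,\psi+\tfrac{\alpha}{n}\,\text{Id}_F\bigr),$$
realizes $\mathcal{M}_H(X,n)$ as the quotient, yielding a finite \'etale Galois cover with group $(\Z/n\Z)^{2g}$. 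Since $T^*\text{Pic}^0(X)$ deformation retracts onto $\text{Pic}^0(X)\cong (S^1)^{2g}$ and $\mathcal{N}_H(X,n)$ is simply connected, $\pi_1$ of the source of $\Psi$ is $\Z^{2g}$; the cover then gives a short exact sequence
$$1\longrightarrow \Z^{2g}\longrightarrow \pi_1(\mathcal{M}_H(X,n))\longrightarrow (\Z/n\Z)^{2g}\longrightarrow 1.$$
The composition $\Phi\circ\Psi$ is $((F,\psi),(L,\alpha))\mapsto (L^n,n\alpha)$ followed by projection, which on $\pi_1$ is multiplication by $n$ on $\Z^{2g}$; combined with the above short exact sequence this forces $\pi_1(\mathcal{M}_H(X,n))\,=\,\Z^{2g}$ with $\Phi_*$ the identity.

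The principal obstacle is the first step. Establishing simple connectedness of $\mathcal{N}_H^s(X,n)$ in the degree-zero setting is the technical heart, since one must control both the limits of the Hitchin flow and the topology of the non-dominant components of the nilpotent cone. If this is granted, for instance by invoking results of Hitchin, Simpson, or Hausel on the coprime case and reducing via the codimension-two normality argument, then the determinant-cover bookkeeping in the second step is essentially formal.
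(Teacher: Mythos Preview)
Your acknowledged obstacle in the first step is exactly where the paper's argument diverges from yours, and the paper's route bypasses it entirely. Rather than retracting onto the nilpotent cone and then trying to control how its various components contribute to $\pi_1$, the paper observes that the cotangent bundle $T^*\mathcal{N}^s_X(n)$ of the moduli space of \emph{stable} vector bundles with trivial determinant sits inside $\mathcal{N}_H(X,n)$ as a Zariski open subset: a stable bundle equipped with any Higgs field is already a stable Higgs bundle, and openness of stability makes this locus Zariski open. Since $\mathcal{N}^s_X(n)$ is known to be simply connected, so is its cotangent bundle; irreducibility and normality of $\mathcal{N}_H(X,n)$ then give the surjection $\pi_1(T^*\mathcal{N}^s_X(n))\twoheadrightarrow\pi_1(\mathcal{N}_H(X,n))$, and the argument is finished in one line. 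Your nilpotent-cone strategy, by contrast, would require understanding how the non-dominant components (the fixed loci corresponding to variations of Hodge structure) glue onto the zero-section component; in degree zero these meet along strictly semistable loci, and there is no off-the-shelf codimension or stratification statement that kills their $\pi_1$-contribution without substantial further work. You already know the key input ($\mathcal{N}^s_X(n)$ simply connected); the point is to use it on a Zariski open of $\mathcal{N}_H(X,n)$ directly rather than on a closed subset reached by a flow.

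For the second assertion the paper again takes the shorter path: the map $\phi\colon\mathcal{M}_H(X,n)\to\mathcal{M}_H(X,1)=\mathrm{Pic}^0(X)\times H^0(X,K_X)$ sending $(E,\theta)$ to $(\bigwedge^n E,\mathrm{trace}\,\theta)$ is an algebraic fiber bundle with fiber $\mathcal{N}_H(X,n)$, so the homotopy long exact sequence and simple connectedness of the fiber make $\phi_\#$ an isomorphism immediately. Your \'etale-cover argument is correct (modulo the harmless slip that $\mathrm{trace}(\psi+\tfrac{\alpha}{n}\,\mathrm{Id}_F)=\alpha$, not $n\alpha$), but it replaces one application of the fibration long exact sequence by a covering-space short exact sequence plus an extension computation, which is more bookkeeping for the same conclusion.
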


\begin{proof}
We will first show that ${\mathcal N}_H(X,n)$ is irreducible and normal. In
\cite[p. 70, Theorem 11.1]{Simpson2} it
is proved that ${\mathcal M}_H(X,n)$ is irreducible and normal (hence connected). The
same proof works for ${\mathcal N}_H(X,n)$. But normality 
of ${\mathcal N}_H(X,n)$ can be deduced from the
normality of ${\mathcal M}_H(X,n)$ as follows: let
\begin{equation}\label{phi}
\phi\, :\, {\mathcal M}_H(X,n)\, \longrightarrow\, {\mathcal M}_H(X,1)
\end{equation}
be morphism defined by $(E\, ,\theta)\,\longmapsto\, (\bigwedge^n E\, ,\text{trace}
(\theta))$. Then $\phi$ is an algebraic fiber bundle with fiber ${\mathcal N}_H(X,n)$.
Since ${\mathcal M}_H(X,1)\,=\, {\rm Pic}^0(X)\times H^0(X,\, K_X)$ is smooth, it follows
that ${\mathcal N}_H(X,n)$ is normal if and only if ${\mathcal M}_H(X,n)$ is normal.
As mentioned before, ${\mathcal M}_H(X,n)$ is normal by a theorem of Simpson.

Let ${\mathcal N}^s_X(n)$ be the moduli space of stable vector bundles on $X$ of
rank $n$ and trivial determinant. The total space of the cotangent bundle
$T^*{\mathcal N}^s_X(n)$ of the variety ${\mathcal N}^s_X(n)$ is naturally contained in 
${\mathcal N}_H(X,n)$. From the openness of the stability condition (see
\cite[p. 635, Theorem 2.8(B)]{Ma}) it follows that $T^*{\mathcal N}^s_X(n)$ is
a Zariski open subset of ${\mathcal N}_H(X,n)$.

It is known that ${\mathcal N}^s_X(n)$ is simply connected (see \cite[p. 266,
Proposition 1.2(b)]{BBGN}). Therefore, $T^*{\mathcal N}^s_X(n)$ is simply connected.
Since ${\mathcal N}_H(X,n)$ is normal and $T^*{\mathcal N}^s_X(n)$ is a simply connected Zariski
open subset of it, we conclude that ${\mathcal N}_H(X,n)$ is simply connected (see the discussion at the beginning of Section \ref{sec:open}).

Now consider the variety ${\mathcal M}_H(X,n)$. The morphism $\phi$ in \eqref{phi}
is an algebraic fiber bundle with fiber ${\mathcal N}_H(X,n)$. Since
${\mathcal N}_H(X,n)$ is connected and
simply connected, we conclude that the corresponding homomorphism
$$
\phi_{\#}\, :\, \pi_1({\mathcal M}_H(X,n))\,\longrightarrow\,
\pi_1({\mathcal M}_H(X,1))
$$
is an isomorphism. The projection
$$
q\, :\,{\mathcal M}_H(X,1)\,=\, {\rm Pic}^0(X)\times H^0(X,\, K_X)\, \longrightarrow\,
{\rm Pic}^0(X)
$$
produces an isomorphism of fundamental groups because
$H^0(X,\, K_X)$ is contractible. Therefore, the map
$$
q \circ\phi\, :\, {\mathcal M}_H(X,n)\, \longrightarrow\, {\rm Pic}^0(X)
$$
induces an isomorphism of fundamental groups.
\end{proof}

When $g,n \geq 2$, excluding $n=2=g$, Theorem \ref{thm:closed} follows from Proposition \ref{prop1} because
$\fX_{\pi_1\Sigma_{g,0}}(\GL(n,\C))$ is homeomorphic to
${\mathcal M}_H(X,n)$, and $\fX_{\pi_1\Sigma_{g,0}}(\SL(n,\C))$ is
homeomorphic to ${\mathcal N}_H(X,n)$ (see \cite[p. 31, Proposition 7.8]{Simpson2}
and \cite[p. 38, Theorem 7.18]{Simpson2}).  The $n=1$ case is trivial, and the remaining cases of Theorem \ref{thm:closed} follow from \cite{Lawton-Ramras}.


\end{document}